\newcommand{\Hh}{\mathcal{H}}
\newcommand{\ep}{\varepsilon}
\newcommand{\R}{\mathbb{R}}
\newcommand{\N}{\mathbb{N}}
\newtheorem{thm}{Theorem}
\newtheorem{lem}[thm]{Lemma}
\newtheorem{cor}[thm]{Corollary}
\newtheorem{remark}{Remark}
\journal{}
\begin{document}

\begin{frontmatter}



\title{The number of medium amplitude limit cycles of some generalized Li\'enard systems}


\author{Salom\'on Rebollo-Perdomo}
\ead{srebollo@mat.uab.cat}
\address{Centre de Recerca Matem\`atica, 08193 Bellaterra,
Barcelona, Spain.
}

\begin{abstract}
We will consider two special families of  polynomial perturbations of the
linear center. For the resulting perturbed systems, which are generalized
Li\'enard systems, we provide the exact upper bound for the number of limit
cycles that bifurcate from the periodic orbits of the linear center.
\end{abstract}

\begin{keyword}

limit cycle \sep Li\'enard system \sep periodic orbit
\end{keyword}

\end{frontmatter}


\section{Introduction and statement of the results}
\label{Intro}
The bifurcation of limit cycles by perturbing a planar system which has
a continuous family of {\it cycles}, {i.e.} periodic orbits, has been an
intensively studied phenomenon; see for instance \cite{CL} and references
therein.
The simplest planar system having a continuous family of cycles is the
linear center, and a special family of its perturbations is
given by the generalized polynomial Li\'enard systems:
\begin{equation}
\label{eq1}
\tag{$1_\ep$}
\dot{x} = y + \sum_{i=1}^{\mu}\ep^i F_i(x),\qquad
\dot{y} = \sum_{i=0}^{\nu}\ep^i g_i(x),
\end{equation}
where $\mu\in \N$, $\nu\in \N \cup \{0\}$, $g_0(x)=-x$, $g_i(x)$ and $F_i(x)$
 are polynomials for $i\geq 1$, and $\ep$ is a small parameter.

The classical and generalized Li\'enard systems appear very often in several
branches of science and
engineering, as biology, chemistry, mechanics, electronics,
etc. see for instance \cite{M} and references therein. In particular Li\'enard
systems are frequent specially in physiological
processes, see for instance \cite{GFD}. In addition, the family of generalized
polynomial Li\'enard systems is one of the most considered families in the
study of limit cycles, see \cite{L}.

We assume that $F_{\mu}(x)\not \equiv 0$, $g_{\nu}(x)\not \equiv 0$,
$m=\max_{1\leq i\leq \mu}\{\deg F_i(x)\}$, and
$n=\max_{1\leq i\leq \nu}\{\deg g_i(x)\}$. For a small enough $\ep$, let
$\Hh_{\nu}^{\mu}(m,n)$  be the maximum number of limit
cycles of \eqref{eq1} that bifurcate from cycles of the {\it linear center}
$(1_0)$, i.e. the maximum number of {\it medium amplitude limit cycles}
which can bifurcate from  $(1_0)$ under the perturbation \eqref{eq1}. If
$\nu=0$, then $\Hh_{0}^{\mu}(m,n)$ does not depend on $n$; hence we only
write $\Hh_{0}^{\mu}(m)$.
The main problem concerning $\Hh_{\nu}^{\mu}(m,n)$ is
finding its exact value.

We know from \cite{LMP} that $\Hh_0^{1}(m)\geq
\left[(m-1)/2\right]$, where $[\cdot]$ denotes the integer part function.
Moreover, by following \cite[Theorem 3.1]{GT2} we can prove
that  $\Hh_0^{\mu}(m)=\left[(m-1)/2\right]$ for $\mu\geq
1$; Theorem~\ref{mth1} (below)
is a generalization of this result. Also,
we know from \cite{LMT} that $\Hh_1^1(m,n)\geq\left[(m-1)/2\right]$,  $\Hh_2^2(m,n)\geq
\max\left\{\left[(m-1)/2\right],
\left[m/2\right]+\left[n/2\right]-1\right\}$, and $\Hh_3^3(m,n)\geq
\left[(m+n)/2\right]-1$. However, the exact values of
$\Hh_1^1(m,n)$,  $\Hh_2^2(m,n)$, and $\Hh_3^3(m,n)$
were not reported there.

In this paper we  give the exact value of $\Hh_{\nu}^{\mu}(m,n)$ for two
subfamilies of
\eqref{eq1}. More precisely, we will give the exact value of $\tilde{\Hh}_{\nu}^{\mu}(m,n)$ and $\bar{\Hh}_{\nu}^{\mu}(m,n)$, where  $\tilde{\Hh}_{\nu}^{\mu}(m,n)$ is  the
 value of $\Hh_{\nu}^{\mu}(m,n)$ by assuming that $g_i(x)$  is odd for
$1\leq i\leq \nu$, and  $\bar{\Hh}_{\nu}^{\mu}(m,n)$ is the
 value of $\Hh_{\nu}^{\mu}(m,n)$ by assuming that $F_i(x)$ is even for
$\mu_0 < i\leq \mu$, where $\mu_0$  with $1\leq \mu_0\leq \mu$ is the smallest
integer such
that $F_{\mu_0}(x)\not \equiv 0$. Of course, if $\mu_0=\mu$, then $\bar{\Hh}_{\nu}^{\mu}(m,n)=\Hh_{\nu}^{\mu}(m,n)$.

Our main result is the following:

\begin{thm}
\label{mth1}
$(a)$
$\tilde{\Hh}_{\nu}^{\mu}(m,n)=\left[\frac{m-1}{2}\right]$. $(b)$
$\bar{\Hh}_{\nu}^{\mu}(m,n)$ is either $\left[\frac{m-1}{2}\right]$ if $m$ is
odd or
$\left[\frac{m}{2}\right]+ \left[\frac{n}{2}\right]-1$
if $m$ is even.
\end{thm}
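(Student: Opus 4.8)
\medskip
\noindent\emph{Sketch of the intended proof.}
The plan is to study \eqref{eq1} by the averaging method along the period annulus of the linear center $(1_0)$. Writing $x=r\cos\theta$, $y=-r\sin\theta$ and taking $\theta$ as the new independent variable, \eqref{eq1} becomes a scalar equation $dr/d\theta=\sum_{k\ge1}\ep^{k}f_{k}(\theta,r)$, and the displacement map on the section $\{\theta=0\}$ has the form $\Delta(r,\ep)=\sum_{k\ge1}\ep^{k}M_{k}(r)$; since the period function of the linear center is constant, every averaged function $M_{k}$ is a polynomial in $r$. By the standard averaging theory, if $M_{1}\equiv\dots\equiv M_{k_0-1}\equiv0$ and $M_{k_0}\not\equiv0$, then for $|\ep|$ small the number of medium amplitude limit cycles of \eqref{eq1} equals, counted with multiplicity, the number of zeros of $M_{k_0}$ in $(0,+\infty)$, and each simple zero gives rise to exactly one limit cycle; so it suffices to bound that number from above uniformly over the admissible perturbations, and to realise the bound.

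A direct computation gives the first averaged function: the $g$-part of $f_{1}$ is the $\theta$-derivative of a periodic function, hence $M_{1}(r)=-\int_{0}^{2\pi}\cos\theta\,F_{1}(r\cos\theta)\,d\theta$, and only the odd-degree monomials of $F_{1}$ survive the integration, so that $M_{1}(r)=r\,Q(r^{2})$ with $Q$ an arbitrary polynomial of degree $\left[\frac{m-1}{2}\right]$ (its coefficients being free multiples of the odd coefficients of $F_{1}$, respectively of $F_{\mu_0}$ in case $(b)$). Choosing $Q$ with $\left[\frac{m-1}{2}\right]$ simple positive roots yields $\tilde{\Hh}_{\nu}^{\mu}(m,n)\ge\left[\frac{m-1}{2}\right]$ and $\bar{\Hh}_{\nu}^{\mu}(m,n)\ge\left[\frac{m-1}{2}\right]$, which settles the lower bound in $(a)$ and in $(b)$ for $m$ odd. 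For $m$ even this only produces $m/2-1$ limit cycles, so one starts instead from an even $F_{\mu_0}$ (making the above contribution vanish) and computes the next non-trivial averaged function: it now receives a genuine contribution from the $g_{i}$'s via the quadratic term $-r^{-1}(\cos\theta\,F+\sin\theta\,g)(\cos\theta\,g-\sin\theta\,F)$ and from the even $F_{i}$'s, and one checks that it can be made an arbitrary element of $\{r\,P(r^{2}):\deg P\le\frac{m}{2}+\left[\frac{n}{2}\right]-1\}$, giving $\left[\frac{m}{2}\right]+\left[\frac{n}{2}\right]-1$ simple positive zeros.

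For the upper bounds the crucial step is to prove that, whatever $k_0$ is, the first non-vanishing averaged function $M_{k_0}$ lies in the fixed vector space $V_{N}=\{r\,Q(r^{2}):\deg Q\le N\}$, where $N=\left[\frac{m-1}{2}\right]$ in case $(a)$, while in case $(b)$ one has $N=\left[\frac{m-1}{2}\right]$ if $m$ is odd and $N=\frac{m}{2}+\left[\frac{n}{2}\right]-1$ if $m$ is even. After the substitution $s=r^{2}$ the space $V_{N}$ becomes $s^{1/2}$ times the polynomials of degree $\le N$, i.e.\ a positive factor times an extended complete Chebyshev system on $(0,+\infty)$, so every nonzero element of $V_{N}$ has at most $N$ zeros there; together with the averaging theory this gives $\tilde{\Hh}_{\nu}^{\mu}(m,n)\le\left[\frac{m-1}{2}\right]$ and $\bar{\Hh}_{\nu}^{\mu}(m,n)\le N$, and hence the stated equalities. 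The inclusion $M_{k_0}\in V_{N}$ is proved by induction on $k_0$ using the recursion that writes $M_{k}$ as the $2\pi$-average of $f_{k}$ plus a feedback term built from the periodic primitives $r_{1},\dots,r_{k-1}$ of the lower-order right-hand sides (these primitives being periodic precisely because $M_{1}\equiv\dots\equiv M_{k-1}\equiv0$): one shows simultaneously that the vanishing of $M_{1},\dots,M_{k-1}$ forces $F_{1},\dots,F_{k-1}$ to be even---respectively preserves the required $x$-parities in case $(b)$---and that under this parity, together with the hypotheses of the theorem, every product of trigonometric monomials occurring in $f_{k}$ and in the feedback term either integrates to zero over one period or contributes only monomials $r^{2j+1}$ with $2j+1\le2N+1$.

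The main obstacle is exactly this parity-and-degree bookkeeping. A priori the feedback terms are iterated integrals whose $r$-degree could grow with $k_0$, and one must check that the oddness of all the $g_{i}$'s in case $(a)$---equivalently the evenness of the $F_{i}$'s with $i>\mu_0$ in case $(b)$---produces enough cancellation to keep $M_{k_0}$ inside the fixed space $V_{N}$. In case $(a)$ one can alternatively phrase this as an order-by-order reduction of \eqref{eq1} to a system with $\nu=0$, by means of the substitution $\xi=x-\ep\sum_{i\ge1}\ep^{i-1}g_{i}(x)$ followed by a reparametrisation of time, which reduces the statement to the already known identity $\Hh_{0}^{\mu}(m)=\left[\frac{m-1}{2}\right]$; the remaining work is then to control the growth of the $x$-degrees introduced by the substitution, and this is where the oddness of the $g_{i}$'s enters.
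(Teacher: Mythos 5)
Your outline identifies the right objects (the first non-vanishing higher-order function along the period annulus, its membership in a fixed space of odd polynomials, realization of the bound by a suitable choice of coefficients), but it leaves unproved exactly the step that carries the whole theorem: the claim that for every order $k_0$ the first non-vanishing averaged function $M_{k_0}$ stays in the fixed space $V_N$, despite the feedback terms being iterated integrals whose $r$-degree could a priori grow with $k_0$. You state this as ``parity-and-degree bookkeeping'' and even flag it yourself as ``the main obstacle,'' but you give no mechanism that produces the cancellation. In the paper this is precisely where all the work lies: one shows that the relevant $1$-forms remain in the class $\mathcal{A}=\{(xA+xyB)\,dx:\ A,B\in\R[x^2,H]\}$, that the cofactors produced by the relative-cohomology decomposition remain in $\mathcal{S}=\{x^2q_1+yq_2\}$, and that $\mathcal{A}$ is stable under multiplication by $\mathcal{S}$ (Lemmas~\ref{lem2}--\ref{lem4} and Corollary~\ref{cor1}); it is this closure property, fed into the Fran\c{c}oise--Yakovenko--Iliev recursion, that prevents degree growth and, in case $(b)$, pins down the first non-vanishing function to the specific bilinear form $\int_{\gamma_c}\bar q_{\mu_0}\hat g_{k-\mu_0}(x^2)\,y\,dx$ of Lemma~\ref{lem4}$(b)$, whose zero count is $\left[\frac{m}{2}\right]+\left[\frac{n}{2}\right]-1$. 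Without an analogue of this closure argument in the trigonometric/averaging setting, your induction is an assertion, not a proof.

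Two further points would fail as written. In case $(b)$ your induction claims that the vanishing of $M_1,\dots,M_{k-1}$ ``forces $F_1,\dots,F_{k-1}$ to be even''; that is what happens in case $(a)$ (there vanishing forces $\hat f_{k-1}\equiv0$), but in case $(b)$ evenness of $F_i$ for $i>\mu_0$ is a hypothesis, and what the vanishing of the lower-order functions actually forces is the oddness of the relevant $g$'s ($\hat g_{k-1-\mu_0}\equiv0$); also, the first non-vanishing function there is not ``an arbitrary element'' of the degree-$\bigl(\frac{m}{2}+\left[\frac{n}{2}\right]-1\bigr)$ space but has the constrained product structure above, so realizing $\left[\frac{m}{2}\right]+\left[\frac{n}{2}\right]-1$ simple zeros needs a separate (if easy) argument. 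Finally, the alternative reduction in case $(a)$ via $\xi=x-\ep\sum\ep^{i-1}g_i(x)$ does not obviously return a system of the form \eqref{eq1}: differentiating introduces terms $\ep^i y\,g_i'(x)$ that are not functions of $\xi$ added to $y$, and the degrees brought in depend on $n$ while the claimed answer $\left[\frac{m-1}{2}\right]$ does not, so the ``remaining work'' you defer is again the entire difficulty.
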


The assumptions on $g_i(x)$ and $F_i(x)$ in definitions of
$\tilde{\Hh}_{\nu}^{\mu}(m,n)$ and
$\bar{\Hh}_{\nu}^{\mu}(m,n)$, respectively,  are necessary. Otherwise, we can construct systems  \eqref{eq1} having more medium amplitude limit cycles, see Remark 1 in Section~\ref{sec3}.

Theorem \ref{mth1}($b$) is a generalization of Theorem 1.1 in \cite{RP}, where
the case $\mu=\nu=1$ was considered. We note that in such a  case
$\bar{\Hh}_{1}^{1}(m,n)={\Hh}_{1}^{1}(m,n)$. Hence Theorem 1$(b)$ (\cite[Theorem 1.1]{RP}) gives the exact value of ${\Hh}_{1}^{1}(m,n)$.

The proof of Theorem \ref{mth1}  is based on  computing
the maximum number of  isolated zeros
of the first non-vanishing Poincar\'e--Pontryagin--Melnikov function
of
the displacement function of \eqref{eq1}, by taking into account the
restrictions: $g_i(x)$ odd for
$1\leq i\leq \nu$ and $F_i(x)$ even for $\mu_0< i\leq \mu$, respectively.

The paper is organized as follows. In Section \ref{PPMF} we recall the
definition of the displacement function of \eqref{eq1}, as well as the
algorithm to compute the  Poincar\'e--Pontryagin--Melnikov functions.
Preliminary results that allow us to provide elementary proofs
of the main results are given in Section~\ref{sec3}. Finally, in
Section~\ref{sec4} we will  prove  Theorem \ref{mth1}.

\section{Poincar\'e--Pontryagin--Melnikov functions}
\label{PPMF}
The
linear center $(1_0)$ is the Hamiltonian system associated to the  polynomial
$H=(x^2+y^2)/2$; hence its cycles are the circles
$\gamma_c= \{H-c=0\}$  with
$c>0$. By using $c$ as a parameter, the first return map of
\eqref{eq1}  can be
expressed in terms of $\ep$ and
$c$: $\mathcal{P}(\ep,c)$. Therefore the corresponding {\it displacement
function}
$L(\ep,c)=
\mathcal{P}(\ep,c) - c$ is analytic for  small enough $\ep$ and can be
written as the power series in
$\ep$
\begin{equation}
\label{df}
\tag{$2$}
 L(\ep,c)= \ep L_1(c)+\ep^2 L_{2}(c) +
O(\ep^{3}),
\end{equation}
where $L_i(c)$ with $i\geq 1$ is the
{\it Poincar\'e--Pontryagin--Melnikov function} of order
$i$, which is defined for $c\geq 0$.

Let $L_k(c)$ with $k\geq 1$ be the first non-vanishing
coefficient in \eqref{df}. The zeros of $L_k(c)$ are
important in the study of medium amplitude limit cycles of
\eqref{eq1} because of the
{\it Poincar\'e--Pontryagin--Andronov criterion}: The maximum number of
isolated zeros, counting multiplicities, of $L_k(c)$
is an upper bound
for $\Hh_{\nu}^{\mu}(m,n)$. Furthermore each
simple zero $c_0$ of $L_k(c)$
corresponds
to  one and only one limit cycle of \eqref{eq1} with $\ep$ small enough
bifurcating from the cycle
$\gamma_{c_0}$.

We know from \cite{Ili} that $L_k(c)$  has at most
$\left[k(\max\{n,m\}-1)/2\right]$ positive zeros, counting multiplicities. However,
this result  does not
give the value of $\Hh_{\nu}^{\mu}(m,n)$ because
the upper bound for $k$ depending on $\mu$, $\nu$, $m$, and $n$ is unknown.

Now, we will recall the algorithm to compute the functions $L_i(c)$.  System
\eqref{eq1}
can
be written as
$$
\dot{x}=y,\qquad
\dot{y}=-x+ \ep \left(g_1(x)+f_1(x) y\right)+\ep^2 \left(g_2(x)+f_2(x)
y\right)+\cdots,
$$
where $f_i(x)=F_i^{\prime}(x)$, or equivalently as
\begin{equation}
\label{Ll}
\tag{$3_\ep$}
\begin{array}{rl}
dH -\ep \omega_1-\ep^2 \omega_2 -\cdots =0\qquad \mbox{with
$\omega_i=\left(g_i(x)+f_i(x)y\right)dx$ and $\omega_i\equiv 0$ for $i\geq
\max\{\mu,\nu\}$.}
\end{array}
\end{equation}

As we know, $L_1(c)$ is given by  the classical
Poincar\'e--Pontryagin
formula
$
L_1(c) = \int_{\gamma_c}\omega_1.
$
The  result for computing the higher order
Poincar\'e--Pontryagin--Melnikov
functions is the following:
\begin{thm}
\label{th3}
{\rm \bfseries (Yakovenko--Fran\c{c}oise--Iliev algorithm \cite{Ya},
\cite{Fr}, \cite{Ili})}. If $k\geq 2$ and  $L_1(c)\equiv\cdots\equiv
L_{k-1}(c)\equiv 0$,
then there are
polynomials $q_1,\ldots,q_{k-1}$ and $Q_1,\ldots,Q_{k-1}$ such that
$\Omega_1=dQ_1+q_1
dH,\ldots,\Omega_{k-1}=dQ_{k-1} +q_{k-1} dH$, and
$$
L_{k}(c) =  \int_{\gamma_c} \Omega_{k},
$$
where
$$
\Omega_1=\omega_1, \,\, \mbox{and}\,\,
\Omega_l=\omega_l+\sum_{i+j=l}q_i \omega_j\,\, \mbox{with} \,\,
i,j\geq 1\,\, \mbox{for}\,\, 2\leq l\leq k.
$$
\end{thm}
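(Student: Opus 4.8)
The plan is to realize the displacement function as the integral of $dH$ along the perturbed orbit, and then to reduce the computation of each coefficient $L_l(c)$ to a period integral over the unperturbed circle, by induction on $l$. First I would fix a cross-section transverse to the flow, say a segment of the positive $x$-axis parametrized by $c=H$, and let $\Gamma=\Gamma_\ep$ be the orbit of \eqref{Ll} from a point $p_0$ with $H(p_0)=c$ to its first return $p_1$; as $\ep\to0$ this orbit tends to the closed circle $\gamma_c$. Since along $\Gamma$ the Pfaffian equation gives $dH=\theta$ with $\theta:=\sum_{j\ge1}\ep^j\omega_j$, integrating yields
$$
L(\ep,c)=H(p_1)-H(p_0)=\int_{\Gamma}dH=\sum_{j\ge1}\ep^j\int_{\Gamma}\omega_j,
$$
whose $\ep^1$ coefficient is already $\int_{\gamma_c}\omega_1$, the classical Poincar\'e--Pontryagin formula. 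The whole difficulty is to extract the higher coefficients, and this is where the forms $q_i,Q_i$ enter.

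The key analytic input I would isolate first is a relative exactness lemma for the harmonic oscillator Hamiltonian: if $\omega$ is a polynomial $1$-form with $\int_{\gamma_c}\omega=0$ for all small $c>0$, then $\omega=dQ+q\,dH$ for some polynomials $Q,q$. For $H=(x^2+y^2)/2$ this is elementary. Writing $\beta=x\,dy-y\,dx$, one has $dH\wedge\beta=(x^2+y^2)\,dx\wedge dy\neq0$ off the origin, so every polynomial $1$-form decomposes as $\omega=u\,dH+v\,\beta$ with $u,v$ rational and having only powers of $x^2+y^2$ in the denominator. On $\gamma_c$ the pullback of $dH$ vanishes, while for the Hamiltonian field $X_H=y\partial_x-x\partial_y$ one has $\beta(X_H)=-(x^2+y^2)=-2c$, so $\int_{\gamma_c}\omega=-2c\int_0^{2\pi}v\,dt$ and the hypothesis forces the flow-average of $v$ on each circle to vanish; a monomial-by-monomial reduction then shows $v\,\beta$, and hence $\omega$, is of the required form $dQ+q\,dH$. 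This lemma is the crux of the whole argument.

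With the lemma in hand I would run the induction using the generating-function identity
$$
\sum_{m=1}^{l}\ep^m\Omega_m\equiv(1+q)\,\theta\pmod{\ep^{l+1}},\qquad q:=\sum_{i=1}^{l-1}\ep^iq_i,
$$
which is merely a restatement of the recursion $\Omega_l=\omega_l+\sum_{i+j=l}q_i\omega_j$. Assume inductively that $L_1\equiv\cdots\equiv L_{l-1}\equiv0$ and that $q_i,Q_i$ with $\Omega_i=dQ_i+q_i\,dH$ have been constructed for $i\le l-1$; set $Q:=\sum_{i=1}^{l-1}\ep^iQ_i$, so that $\sum_{m=1}^{l-1}\ep^m\Omega_m=dQ+q\,dH$ holds identically as polynomial forms. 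Subtracting this from the displayed identity and using $\theta=dH$ on $\Gamma$ gives, on $\Gamma$, the congruence $dH\equiv dQ+\ep^l\Omega_l\pmod{\ep^{l+1}}$. Integrating over $\Gamma$ and noting that $L=O(\ep^l)$ makes the endpoint displacement $p_1-p_0=O(\ep^l)$, the boundary term $\int_\Gamma dQ=Q(p_1)-Q(p_0)$ is $O(\ep^{l+1})$ because $Q$ starts at order $\ep$; hence the $\ep^l$ coefficient gives
$$
L_l(c)=\int_{\gamma_c}\Omega_l .
$$
For $l\le k-1$ the hypothesis $L_l\equiv0$ then yields $\int_{\gamma_c}\Omega_l\equiv0$, and the relative exactness lemma produces $Q_l,q_l$ with $\Omega_l=dQ_l+q_l\,dH$, closing the induction; the same computation at $l=k$ gives the asserted formula $L_k=\int_{\gamma_c}\Omega_k$.

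I expect the main obstacle to be the relative exactness lemma, that is, the passage from the vanishing of all periods to the algebraic decomposition $\omega=dQ+q\,dH$: the generating-function bookkeeping and the order counting for the boundary term are routine once that is available. A secondary point needing care is the analytic dependence of the return orbit $\Gamma_\ep$ on $\ep$ and the estimate $p_1-p_0=O(\ep^l)$, which justify discarding the boundary contribution at order $\ep^l$; both follow from smooth dependence on parameters together with the nondegeneracy of $H$ on the section away from the origin.
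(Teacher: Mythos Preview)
Your proposal is correct and follows the same route the paper itself sketches: the paper does not give a detailed proof of Theorem~\ref{th3} but simply says it follows from the Poincar\'e--Pontryagin formula, the Ilyashenko--Gavrilov relative exactness theorem ($\int_{\gamma_c}\omega\equiv 0\Rightarrow \omega=dQ+q\,dH$), and an induction argument, referring to \cite{Ili,IY} for details. Your generating-function bookkeeping and boundary-term estimate are a clean implementation of exactly this induction, and your singled-out ``key analytic input'' is precisely the Ilyashenko--Gavrilov step the paper cites; the only difference is that you sketch a direct proof of relative exactness for $H=(x^2+y^2)/2$ rather than invoking the general theorem.
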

The proof of this result
easily follows from the Poincar\'e--Pontryagin
formula, and the Ilyashenko--Gavrilov theorem (\cite{Ily}, \cite{Ga}):
If $\int_{\gamma_c}\omega = 0$ for all $c\geq 0$, then $\omega  =  dQ + q dH$,
where  $Q$
and
$q$ are polynomials, and by applying an induction argument. For a
detailed proof, see for instance
\cite{Ili},  \cite{IY}.

To simplify the
computation of the Poincar\'e--Pontryagin--Melnikov
functions,  we will give some  properties
of $\omega_i$.

\section{Preliminary results}
\label{sec3}
For computing $L_{k}(c)$  for \eqref{eq1} we will use the following
two elementary
lemmas
whose proof
is omitted.
\begin{lem}
\label{lem1}
Let $P$ be a polynomial in the ring $\R\left[x^2,H\right]$. We define
$\deg_2 P$ to be the
degree of $P$ in  $\R[x^2,H]$.
\begin{itemize}
\item[$(a)$]
For $i,j \geq 0$ there are homogeneous polynomials $Q_{ij},\, q_{ij} \in
\R\left[x^2,H\right]$ with $\deg_2 Q_{ij}=i+j$ and
$\deg_2 q_{ij}=i+j-1$, such that
$H^i x^{2j} dx = d\left(xQ_{ij}\right)+\left(x q_{ij}\right)dH$ or $H^i
x^{2j+1} dx =d\left(x^2Q_{ij}\right)+\left(x^2 q_{ij}\right)dH$.
If $i=0$, then $q_{ij}\equiv 0$.

\item[$(b)$]
For $i, j \geq 0$ there are  homogeneous polynomials $Q_{ij}, \,
q_{ij} \in \R[x^2,H]$ with $\deg_2 Q_{ij}=i+j+1$ and
$\deg_2 q_{ij}=i+j$, such that
$H^ix^{2j+1}ydx=d\left(yQ_{ij}\right)+\left(y q_{ij}\right)dH$.
\item[$(c)$] For $i, j \geq 0$ we have $\displaystyle
\int_{\gamma_c}H^ix^{2j}ydx =\frac{-\pi c}{2^j(2j+1)}{2(j+1) \choose
j+1} c^{i+j}.
$
\end{itemize}
\end{lem}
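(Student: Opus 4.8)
The plan is to establish the three items by direct manipulation of polynomial $1$-forms on the plane (recall $H=(x^{2}+y^{2})/2$, so $dH=x\,dx+y\,dy$). Parts $(a)$ and $(c)$ are routine; $(b)$ needs one small idea. For $(a)$ I would just integrate by parts in $x$: since $x^{2j}\,dx=d\!\left(\tfrac{x^{2j+1}}{2j+1}\right)$, one gets
\[
H^{i}x^{2j}\,dx=d\!\left(\tfrac{1}{2j+1}H^{i}x^{2j+1}\right)-\tfrac{i}{2j+1}H^{i-1}x^{2j+1}\,dH ,
\]
so that $Q_{ij}=\tfrac{1}{2j+1}H^{i}x^{2j}$ and $q_{ij}=-\tfrac{i}{2j+1}H^{i-1}x^{2j}$ are monomials in $\R[x^{2},H]$ with $\deg_{2}Q_{ij}=i+j$ and $\deg_{2}q_{ij}=i+j-1$, and $q_{ij}\equiv 0$ when $i=0$. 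The odd case is identical, integrating $x^{2j+1}\,dx=d\!\left(\tfrac{x^{2j+2}}{2j+2}\right)$.

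For $(b)$ I would look for $Q_{ij},q_{ij}\in\R[x^{2},H]$ directly. Expanding and using $dH=x\,dx+y\,dy$,
\[
d(yQ)+(yq)\,dH=y\bigl[Q_{x}+x(Q_{H}+q)\bigr]\,dx+\bigl[Q+y^{2}(Q_{H}+q)\bigr]\,dy ,
\]
where $Q_{x},Q_{H}$ are the partials of $Q$ regarded as an element of $\R[x,H]$. Comparing the coefficients of $dx$ and $dy$, requiring this to equal $H^{i}x^{2j+1}y\,dx$ is equivalent to $Q+y^{2}(Q_{H}+q)=0$ and $Q_{x}+x(Q_{H}+q)=H^{i}x^{2j+1}$. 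Putting $P:=Q_{H}+q$ and using $y^{2}=2H-x^{2}$, the first identity forces $Q=(x^{2}-2H)P$; substituting this into the second and dividing by $x$ collapses everything into a single linear equation for $P$: writing $u=x^{2}$,
\[
2(u-2H)P_{u}+3P=H^{i}u^{j}.
\]

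Now this equation lives in the finite-dimensional space $V_{d}$ of polynomials in $\R[u,H]$ homogeneous of weighted degree $d=i+j$. On the monomial basis $e_{a}=u^{a}H^{d-a}$, $0\le a\le d$, the operator $P\mapsto 2(u-2H)P_{u}+3P$ acts by $e_{a}\mapsto(2a+3)e_{a}-4a\,e_{a-1}$, hence it is triangular with nonzero diagonal entries $3,5,\dots,2d+3$, and is therefore invertible. Solving for $P$ and setting $Q_{ij}=(x^{2}-2H)P$ and $q_{ij}=3P-(x^{2}-2H)P_{H}$ produces homogeneous elements of $\R[x^{2},H]$ with $\deg_{2}Q_{ij}=i+j+1$ and $\deg_{2}q_{ij}=i+j$, as claimed. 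For $(c)$ I would parametrize $\gamma_{c}$ by $x=\sqrt{2c}\cos\theta$, $y=\sqrt{2c}\sin\theta$, on which $H\equiv c$, so that $\int_{\gamma_{c}}H^{i}x^{2j}y\,dx=-2^{j+1}c^{i+j+1}\int_{0}^{2\pi}\cos^{2j}\theta\sin^{2}\theta\,d\theta$; writing $\sin^{2}\theta=1-\cos^{2}\theta$, using $\int_{0}^{2\pi}\cos^{2k}\theta\,d\theta=\frac{2\pi}{4^{k}}\binom{2k}{k}$, and simplifying with $\binom{2j+2}{j+1}=\frac{2(2j+1)}{j+1}\binom{2j}{j}$ gives the stated value.

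The only delicate step is the reduction in $(b)$: the bookkeeping with $dx$, $dy$, $dH$ modulo the relation $y^{2}=2H-x^{2}$, and then verifying that the operator obtained on $V_{d}$ is triangular. Once the equation $2(u-2H)P_{u}+3P=H^{i}u^{j}$ has been isolated, its solvability and the control on the degrees of $Q_{ij}$ and $q_{ij}$ are immediate.
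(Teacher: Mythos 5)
Your proof is correct; note that the paper itself omits the proof of this lemma (it is one of the ``two elementary lemmas whose proof is omitted''), so there is no argument of the author's to compare against, and your write-up in fact fills that gap. Parts $(a)$ and $(c)$ are the routine computations one expects: the integration by parts yields monomial $Q_{ij},q_{ij}$ of the stated degrees with $q_{ij}\equiv 0$ when $i=0$, and the Wallis-type evaluation does reproduce the stated constant, the orientation being the positive (counterclockwise) one, which is the convention consistent with the sign in the statement (e.g.\ $\int_{\gamma_c}y\,dx=-2\pi c$). Part $(b)$ is the only genuinely non-trivial item and your treatment is sound: comparing the $dx$- and $dy$-coefficients is legitimate because $x$ and $H=(x^2+y^2)/2$ are algebraically independent, the reduction to $2(u-2H)P_u+3P=H^iu^j$ with $u=x^2$ is correct, and the operator is indeed triangular on the space of homogeneous polynomials of degree $d=i+j$ with nonzero diagonal $3,5,\dots,2d+3$, so a homogeneous solution $P$ exists (and is unique); the degree counts for $Q_{ij}=(x^2-2H)P$ and $q_{ij}=3P-(x^2-2H)P_H$ follow at once, and I verified the resulting decomposition directly in low-degree cases. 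Your route is slightly more structural than the explicit recursive construction the paper implicitly has in mind (compare its displayed formula $(5)$ for the case $i=0$), and it buys two extras: uniqueness of the decomposition with $Q_{ij},q_{ij}\in\R[x^2,H]$, and, by back-substituting through the triangular system, the fact that $q_{ij}\not\equiv 0$, so the degree assertion holds literally --- a point used tacitly later when the paper needs $\bar q_{\mu_0}\not\equiv 0$.
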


\begin{lem}
\label{lem2}
 If $\omega
\in \mathcal{A}:=\left\{\left(xA+xyB\right)dx \, \big{|}\, A, B
\in\R\left[x^2,H\right]\right\}$ and
$q
\in \mathcal{S}:=\left\{x^2q_1+yq_2 \,\big{|}\, q_1, q_2
\in\R\left[x^2,H\right]\right\}$, then $q \omega \in \mathcal{A}$.
\end{lem}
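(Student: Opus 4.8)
The plan is to prove the lemma by a direct expansion, relying on two facts: that $\R[x^2,H]$ is a commutative ring, hence closed under products, and that $y^2 = 2H - x^2$ lies in $\R[x^2,H]$. Writing $\omega = (xA + xyB)\,dx$ with $A,B \in \R[x^2,H]$ and $q = x^2 q_1 + y q_2$ with $q_1, q_2 \in \R[x^2,H]$, I would first multiply out
$$
q\,\omega = \bigl(x^3 q_1 A + x\,y\, q_2 A + x^3 y\, q_1 B + x\,y^2 q_2 B\bigr)\,dx .
$$

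Next I would sort the four terms according to whether or not they carry a factor $y$. The first term equals $x\,(x^2 q_1 A)\,dx$, and since $x^2 q_1 A \in \R[x^2,H]$ it has the form $x\tilde{A}\,dx$. The fourth term, after substituting $y^2 = 2H - x^2$, becomes $x\,\bigl((2H-x^2)q_2 B\bigr)\,dx$, again of the form $x\tilde{A}\,dx$ with $(2H-x^2)q_2B \in \R[x^2,H]$. The second and third terms are $xy\,(q_2 A)\,dx$ and $xy\,(x^2 q_1 B)\,dx$, both of the form $xy\tilde{B}\,dx$ with coefficients in $\R[x^2,H]$. Collecting everything,
$$
q\,\omega = \bigl(x\tilde{A} + xy\,\tilde{B}\bigr)\,dx, \qquad \tilde{A} = x^2 q_1 A + (2H - x^2)q_2 B, \quad \tilde{B} = q_2 A + x^2 q_1 B,
$$
and $\tilde{A},\,\tilde{B} \in \R[x^2,H]$, so $q\,\omega \in \mathcal{A}$.

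There is essentially no obstacle here, this being a closure statement; the only point requiring a moment's attention is that an even power of $y$ must be rewritten through $y^2 = 2H - x^2$ before it can be absorbed into $\R[x^2,H]$, while odd powers of $y$ beyond the first never appear since $q$ and $\omega$ each contribute at most one factor of $y$. Alternatively, one could phrase the argument structurally: $\mathcal{A} = x\,\R[x^2,H]\,dx \oplus xy\,\R[x^2,H]\,dx$ and $\mathcal{S} = x^2\R[x^2,H] \oplus y\,\R[x^2,H]$ are modules over the ring $\R[x^2,H]$, so it suffices to verify membership in $\mathcal{A}$ for the four products of generators, which is precisely the computation carried out above.
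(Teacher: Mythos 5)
Your expansion is correct: multiplying out $q\,\omega$ and using $y^2=2H-x^2$ to absorb the even power of $y$ into $\R[x^2,H]$ gives exactly the form $(x\tilde{A}+xy\tilde{B})\,dx$ required for membership in $\mathcal{A}$. The paper omits the proof of this lemma as elementary, and your computation is precisely the intended argument, so there is nothing to add.
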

The next two results are straightforward consequences of these two previous
lemmas.
\begin{cor}
\label{cor1}
If $\omega
\in \mathcal{A}$, then $\int_{\gamma_c} \omega
\equiv 0$, $\omega=dQ+qdH$ with $q \in \mathcal{S}$, and $q \omega \in
\mathcal{A}$.
\end{cor}
\begin{cor}
\label{cor2}
If
$\displaystyle P{\left(x^2\right)}=\sum_{r=0}^{d}p_{r}x^{2r} \in
\R\left[x^2\right]$, then $\displaystyle \int_{\gamma_c} P{\left(x^2\right)} y
dx=-\pi c \left(
\sum_{r=0}^{d}{2(r+1) \choose r+1}\frac{
p_{r}}{2^{r}(2r+1)}{c^{r}}\right).$
\end{cor}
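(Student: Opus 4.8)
The plan is to obtain the formula as an immediate consequence of Lemma~\ref{lem1}($c$) together with the linearity of the line integral over $\gamma_c$. First I would use linearity to split the integral of the polynomial integrand into a finite sum of monomial integrals,
\begin{equation*}
\int_{\gamma_c} P(x^2)\, y\, dx = \sum_{r=0}^{d} p_r \int_{\gamma_c} x^{2r}\, y\, dx,
\end{equation*}
so that the whole computation reduces to evaluating each term $\int_{\gamma_c} x^{2r}\, y\, dx$ separately.

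I would then apply Lemma~\ref{lem1}($c$) in the special case $i=0$, $j=r$. Since the factor $c^{i+j}$ appearing there collapses to $c^{r}$ when $i=0$, this gives
\begin{equation*}
\int_{\gamma_c} x^{2r}\, y\, dx = \frac{-\pi c}{2^{r}(2r+1)}\binom{2(r+1)}{r+1}\, c^{r}.
\end{equation*}
Substituting this into the sum above and pulling the common factor $-\pi c$ out of every summand yields exactly the asserted expression.

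There is essentially no obstacle in this argument, as the statement is a routine specialization of Lemma~\ref{lem1}($c$); the only point deserving attention is checking that taking $i=0$ reduces the power of $c$ correctly, so that each summand contributes $c^{r+1}$ once the factor $c$ is restored. As an independent verification one could instead parametrize the cycle $\gamma_c$ by $x=\sqrt{2c}\,\cos\theta$ and $y=\sqrt{2c}\,\sin\theta$ and evaluate the resulting trigonometric integrals $\int_0^{2\pi}\cos^{2r}\theta\,\sin^2\theta\, d\theta$ through the standard Wallis formula, which reproduces the same central binomial coefficients and normalizing constants.
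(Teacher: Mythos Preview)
Your argument is correct and matches the paper's approach: the paper simply states that Corollary~\ref{cor2} is a straightforward consequence of Lemma~\ref{lem1}, and your proof makes this explicit by applying Lemma~\ref{lem1}($c$) with $i=0$, $j=r$ together with linearity of the integral.
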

The following two lemmas will be important in the proof of Theorem \ref{mth1}.
\begin{lem}
\label{lem3}
Suppose Theorem \ref{th3}.  If
 $\Omega_l \in \mathcal{A}$ for $1\leq l\leq k-1$, then $\omega_l
\in
\mathcal{A}$ for $1\leq l\leq k-1$, and
$
L_{k}(c) =  \int_{\gamma_c} \omega_{k}.
$
\end{lem}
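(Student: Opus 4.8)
The plan is to prove the lemma by induction on $l$, establishing simultaneously that $\omega_l\in\mathcal A$ and that the auxiliary polynomial $q_l$ produced by Theorem~\ref{th3} may be taken in $\mathcal S$, for $1\le l\le k-1$. The base case is immediate: since $\Omega_1=\omega_1$, the hypothesis gives $\omega_1\in\mathcal A$, and then Corollary~\ref{cor1} provides a representation $\Omega_1=dQ_1+q_1\,dH$ with $q_1\in\mathcal S$, which is the one we feed into the algorithm.

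For the inductive step, fix $l$ with $2\le l\le k-1$ and assume $\omega_i\in\mathcal A$ and $q_i\in\mathcal S$ for all $1\le i\le l-1$. In the formula $\Omega_l=\omega_l+\sum_{i+j=l}q_i\omega_j$ the indices run over $i,j\ge 1$, hence $i,j\le l-1$, so each summand has $q_i\in\mathcal S$ and $\omega_j\in\mathcal A$ by the inductive hypothesis; Lemma~\ref{lem2} then gives $q_i\omega_j\in\mathcal A$, and therefore $\sum_{i+j=l}q_i\omega_j\in\mathcal A$ since $\mathcal A$ is a real vector space. Subtracting this from $\Omega_l\in\mathcal A$ (the hypothesis) yields $\omega_l\in\mathcal A$, and a final application of Corollary~\ref{cor1} to $\Omega_l$ gives a representation $\Omega_l=dQ_l+q_l\,dH$ with $q_l\in\mathcal S$. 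This closes the induction. The mild point to keep in mind is that $\Omega_l$ depends on the earlier choices $q_1,\dots,q_{l-1}$, but two admissible choices of a given $q_i$ differ by an element of $\R[H]$, and multiplying $\omega_j\in\mathcal A$ by such an element keeps it in $\mathcal A$; hence whether $\Omega_l$ lies in $\mathcal A$ is insensitive to these choices and the hypothesis is unambiguous.

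With the induction completed we have $\omega_j\in\mathcal A$ and $q_i\in\mathcal S$ for all $1\le i,j\le k-1$. By Theorem~\ref{th3}, $L_k(c)=\int_{\gamma_c}\Omega_k$ with $\Omega_k=\omega_k+\sum_{i+j=k}q_i\omega_j$, $i,j\ge1$; every cross term here has $i,j\le k-1$, so $q_i\omega_j\in\mathcal A$ by Lemma~\ref{lem2} and thus $\int_{\gamma_c}q_i\omega_j\equiv 0$ by Corollary~\ref{cor1}. Therefore $L_k(c)=\int_{\gamma_c}\omega_k$, as asserted. I expect no serious obstacle: the argument is essentially bookkeeping, and the only thing requiring attention is that each $q_i$ can be chosen inside the multiplicatively convenient class $\mathcal S$ — precisely the content of Corollary~\ref{cor1} — which is what lets Lemma~\ref{lem2} annihilate all the cross terms.
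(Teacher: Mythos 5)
Your proof is correct and follows essentially the same route as the paper: both arguments use Corollary~\ref{cor1} to write $\Omega_l=dQ_l+q_l\,dH$ with $q_l\in\mathcal{S}$, Lemma~\ref{lem2} to place the cross terms $q_i\omega_j$ in $\mathcal{A}$, subtraction to conclude $\omega_l\in\mathcal{A}$, and the vanishing of $\int_{\gamma_c}$ over $\mathcal{A}$ to reduce $L_k(c)$ to $\int_{\gamma_c}\omega_k$; your induction on $l$ versus the paper's induction on $k$ is only a cosmetic reorganization (and your aside on the independence of the choice of the $q_i$ is a harmless extra).
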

\begin{proof} We proceed by induction on $k$.
If $k=2$, then $\Omega_1=\omega_1 \in
\mathcal{A}$. Hence $\omega_1=dQ_1+q_1dH$,
$q_1\omega_1 \in \mathcal{A}$, and $\int_{\gamma_c}
q_1\omega_1\equiv 0$ by Corollary \ref{cor1}. Since
$L_2(c)=\int_{\gamma_c}\Omega_2= \int_{\gamma_c}\omega_2
+\int_{\gamma_c}q_1\omega_1$
because of Theorem \ref{th3},
$L_2(c)=\int_{\gamma_c}\omega_2$.

We assume  that the lemma is true for $k-1$, and we will prove it for
$k$.
By assumption, $\Omega_l \in \mathcal{A}$ for  $1\leq l \leq k-1$. Then,
by Corollary \ref{cor1}, $\Omega_l=dQ_l+q_ldH$
with $q_l\in \mathcal{S}$ for all $1\leq l \leq k-1$. In addition, by the
induction
hypothesis,
$\omega_l \in \mathcal{A}$ for $1\leq l \leq k-2$. Thus,
$\overline{\Omega}_{k-1}:=\sum_{i+j=k-1}q_i \omega_j$
with $i,\, j\geq 1$
is an element of $\mathcal{A}$ following Lemma~\ref{lem2}.  Since
$\omega_{k-1}= \Omega_{k-1} -
\overline{\Omega}_{k-1}$, $\omega_{k-1} \in \mathcal{A}$. Hence it is clear that
$\overline{\Omega}_{k}:=\sum_{i+j=k}q_i \omega_j$ with $i,\, j\geq 1$
is an element of $\mathcal{A}$, which implies that
$
\int_{\gamma_c}
\overline{\Omega}_{k} \equiv 0$ by  Corollary~\ref{cor1}. Finally, from Theorem
\ref{th3} we have
$L_{k}(c)=\int_{\gamma_c} \omega_{k} + \int_{\gamma_c}
\overline{\Omega}_{k}$. Therefore
$L_{k}(c)=\int_{\gamma_c}\omega_{k}$.
\end{proof}

Before announce next lemma, we note that each polynomial
$h(x)=\sum_{r=0}^{m-1}a_{r}x^{r}$ of degree $m-1$ can be written as
\begin{equation}
\label{eq7}
\tag{$4$}
h(x)=\hat{h}{\left(x^2\right)} + x \tilde{h}{\left(x^2\right)},\quad \mbox{where} \quad
\hat{h}{\left(x^2\right)}=
\sum_{r=0}^{\left[\frac{m-1}{2}\right]}a_{2r+1}x^{2r}, \quad \mbox{and} \quad
\tilde{h}{\left(x^{2}\right)}=
\sum_{r=0}^{\left[\frac{m-2}{2}\right]}a_{2r+2}x^{2r}.
\end{equation}

\begin{lem}
\label{lem4}
Let $\omega=\left(g(x)+f(x)y\right)dx$, where $f(x)=\sum_{r=0}^{m-1}a_{r}x^{r}$
and $g(x)=\sum_{s=0}^{n}b_{s}x^{s}$.

\begin{itemize}
\item[$(a)$]
$\displaystyle
\int_{\gamma_c}  \omega=\int_{\gamma_c}
\hat{f}{\left(x^2\right)} y dx=-\pi c\left(
\sum_{r=0}^{\left[\frac{m-1}{2}\right]}{2(r+1) \choose r+1}\frac{
a_{2r+1}}{2^{r}(2r+1)}{c^{r}}\right).
$
\item[$(b)$]If
$\int_{\gamma_c} \omega \equiv 0$,  then $\omega=dQ+(y\bar{q}) dH$ with $\bar{q}
\in
\R\left[x^2,H\right]$ of degree $\deg_2 \bar{q}=\left[(m-2)/2\right]$, and
$$
\int_{\gamma_c} \left(y\bar{q}\right)\omega= \int_{\gamma_c} \bar{q}
\hat{g}{\left(x^2\right)} ydx= -\pi
c\sum_{s=0}^{\left[\frac{n}{2}\right]}
\left(\sum_{r=0}^{\left[\frac{m-2}{2}\right]}{2(s+r+1) \choose s+r+1}
\frac{{\left(b_{2s}\right)}(
a_{2r+2})}{2^{s+r}(2s+1)}{c^{s+r}} \right).
$$
\item[$(c)$]$\int_{\gamma_c}
\left(y\bar{q}\right)\omega \equiv 0$ if and only if $\bar{q} \equiv 0$, or
$\hat{g}{\left(x^2\right)}\equiv 0$.
\end{itemize}
\end{lem}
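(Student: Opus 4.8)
The plan is to treat the three parts in turn, reducing every period along $\gamma_c$ to the explicit primitive of Lemma~\ref{lem1}$(c)$ after discarding $1$-forms in the class $\mathcal{A}$, which have vanishing period by Corollary~\ref{cor1}. For $(a)$ I would split $g$ and $f$ into even and odd parts via \eqref{eq7}, so that $\omega = g(x)\,dx + \hat f(x^2)y\,dx + x\tilde f(x^2)y\,dx$. Here $g(x)\,dx$ is exact, and $x\tilde f(x^2)y\,dx = \bigl(xy\,\tilde f(x^2)\bigr)\,dx$ lies in $\mathcal{A}$; hence both have zero period, $\int_{\gamma_c}\omega = \int_{\gamma_c}\hat f(x^2)y\,dx$, and Corollary~\ref{cor2} with $P = \hat f$ gives the formula in $(a)$.

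For $(b)$, assume $\int_{\gamma_c}\omega \equiv 0$. By $(a)$ and Corollary~\ref{cor2}, $\int_{\gamma_c}\omega$ is $-\pi c$ times a polynomial in $c$ whose coefficients are nonzero multiples of the $a_{2r+1}$, so $\hat f \equiv 0$ and $\omega = g(x)\,dx + x\tilde f(x^2)y\,dx$ with $g(x)\,dx$ exact. Feeding each monomial $x^{2r+1}y\,dx$ into Lemma~\ref{lem1}$(b)$ with $i = 0$ gives $x^{2r+1}y\,dx = d(yQ_{0r}) + (yq_{0r})\,dH$ with $q_{0r} \in \R[x^2,H]$ homogeneous of $\deg_2$ equal to $r$; putting $\bar q = \sum_r a_{2r+2}\,q_{0r}$ then puts $\omega$ in the form $dQ + (y\bar q)\,dH$ with $\deg_2\bar q = [(m-2)/2]$. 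To evaluate $\int_{\gamma_c}(y\bar q)\omega$, I would expand it as $(y\bar q)g(x)\,dx + y^2\bar q\,x\tilde f(x^2)\,dx$ and use $y^2 = 2H - x^2$: the second summand equals $\bigl(x(2H-x^2)\bar q\,\tilde f(x^2)\bigr)\,dx \in \mathcal{A}$, and the $x\tilde g$ part of the first equals $\bigl(xy\,\bar q\,\tilde g(x^2)\bigr)\,dx \in \mathcal{A}$, so by Corollary~\ref{cor1} only the $\hat g$ part survives and $\int_{\gamma_c}(y\bar q)\omega = \int_{\gamma_c}\bar q\,\hat g(x^2)y\,dx = \sum_{r,s} a_{2r+2}b_{2s}\int_{\gamma_c}q_{0r}x^{2s}y\,dx$.

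The heart of $(b)$ is the evaluation of $\int_{\gamma_c}q_{0r}x^{2s}y\,dx$, and here I would first pin down $q_{0r}$ itself. Taking $d$ of $x^{2r+1}y\,dx = d(yQ_{0r}) + (yq_{0r})\,dH$ kills the exact term and, after $dH = x\,dx + y\,dy$, gives the relation $q_{0r} - 2y^2\,\partial_{x^2}q_{0r} = x^{2r}$; writing $q_{0r} = \sum_k c_k(H)x^{2k}$ and $y^2 = 2H - x^2$ turns it into $(2k+1)c_k - 4(k+1)Hc_{k+1} = \delta_{kr}$, which has a unique polynomial solution, and one checks that $q_{00} = 1$ together with $q_{0r} = \tfrac{1}{2r+1}x^{2r} + \tfrac{4r}{2r+1}Hq_{0,r-1}$ satisfies it. Since $H \equiv c$ on $\gamma_c$, this recursion yields $\int_{\gamma_c}q_{0r}x^{2s}y\,dx = \tfrac{1}{2r+1}\int_{\gamma_c}x^{2(r+s)}y\,dx + \tfrac{4rc}{2r+1}\int_{\gamma_c}q_{0,r-1}x^{2s}y\,dx$, and an induction on $r$ (with Lemma~\ref{lem1}$(c)$ supplying the base case and the first term) identifies $\int_{\gamma_c}q_{0r}x^{2s}y\,dx$ with $-\pi\binom{2(r+s+1)}{r+s+1}c^{r+s+1}\big/\bigl(2^{r+s}(2s+1)\bigr)$; the only thing to verify is the elementary identity $(2s+1) + 4r(r+s+1) = (2r+1)(2r+2s+1)$ (together with the Pascal-type relation for $\binom{2N+2}{N+1}$). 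Summing over $r,s$ produces the double sum in $(b)$.

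For $(c)$, the direction ``$\bar q \equiv 0$ or $\hat g \equiv 0$'' trivially annihilates $\int_{\gamma_c}\bar q\,\hat g(x^2)y\,dx$. Conversely, in the double sum of $(b)$ the coefficient of $c^{N+1}$ is a nonzero constant times $\sum_{r+s=N} a_{2r+2}b_{2s}/(2s+1)$, i.e.\ the coefficient of $T^N$ in the product $\bigl(\sum_r a_{2r+2}T^r\bigr)\bigl(\sum_s \tfrac{b_{2s}}{2s+1}T^s\bigr)$; its vanishing for all $N$ forces that product to be $0$ in $\R[T]$, hence one factor to be $0$, i.e.\ $\tilde f \equiv 0$ --- equivalently $\bar q \equiv 0$, since the $q_{0r}$ have pairwise distinct $\deg_2$ and are therefore linearly independent --- or $\hat g \equiv 0$. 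I expect the real obstacle to be this last part of $(b)$: one must identify $q_{0r}$ precisely (via the relation obtained by differentiating) rather than merely use its existence from Lemma~\ref{lem1}$(b)$, and then run the induction; everything else is bookkeeping with the class $\mathcal{A}$ and Lemma~\ref{lem1}$(c)$.
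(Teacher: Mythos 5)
Your proposal is correct, and its skeleton coincides with the paper's: part $(a)$ via the even/odd splitting \eqref{eq7} and Corollary~\ref{cor2}; in $(b)$, the vanishing of $\hat f$, the decomposition of each $x^{2r+1}y\,dx$ through Lemma~\ref{lem1}$(b)$, and the reduction of $(y\bar q)\omega$ to $\bar q\,\hat g(x^2)y\,dx$ using $y^2=2H-x^2$ together with Corollary~\ref{cor1} are exactly the paper's steps. Where you genuinely diverge is the evaluation of $\int_{\gamma_c}q_{0r}x^{2s}y\,dx$: the paper writes down the closed-form expression \eqref{eq5} for $\overline q_r$ (``a simple computation shows'') and then invokes a ``straightforward computation'' with Lemma~\ref{lem1}$(c)$, whereas you pin down $q_{0r}$ intrinsically by the relation $q_{0r}-2y^2\partial_{x^2}q_{0r}=x^{2r}$ obtained by exterior differentiation (this does determine $q_{0r}$ uniquely in $\R[x^2,H]$, by a leading-coefficient argument), derive the two-term recursion $q_{00}=1$, $q_{0r}=\tfrac{1}{2r+1}x^{2r}+\tfrac{4r}{2r+1}Hq_{0,r-1}$, and run an induction on $r$; I checked the recursion, the identity $(2s+1)+4r(r+s+1)=(2r+1)(2r+2s+1)$, and the resulting value $\int_{\gamma_c}q_{0r}x^{2s}y\,dx=-\pi\binom{2(r+s+1)}{r+s+1}c^{r+s+1}\big/\bigl(2^{r+s}(2s+1)\bigr)$, and they reproduce the displayed double sum exactly. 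This route buys two things: the computation the paper leaves implicit is actually carried out and verifiable, and your normalization is the one consistent with the final formula (e.g.\ the $r=s=0$ term $-2\pi c\,a_2b_0$), whereas \eqref{eq5} as printed appears to carry an extra factor --- at $r=0$ it gives $\overline q_0=2$ while the decomposition forces $q_{00}=1$ --- so your derivation doubles as a useful cross-check. Finally, your argument for $(c)$, reading the coefficient of $c^{N+1}$ as the $T^N$-coefficient of the product $\bigl(\sum_r a_{2r+2}T^r\bigr)\bigl(\sum_s \tfrac{b_{2s}}{2s+1}T^s\bigr)$ and using that $\R[T]$ has no zero divisors, plus the linear independence of the $q_{0r}$ to pass from $\tilde f\equiv0$ to $\bar q\equiv0$, supplies exactly the detail behind the paper's one-line ``follows from the formula given in statement $(b)$.''
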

\begin{proof} ($a$). By ($a$) and ($b$) of Lemma \ref{lem1},
$\int_{\gamma_c}\omega=
\int_{\gamma_c}\hat{f}{\left(x^2\right)}ydx$. Finally, the statement follows
from Corollary~\ref{cor2}.

($b$). If $\int_{\gamma_c}\omega\equiv 0$, then $\hat{f}{\left(x^2\right)}\equiv
0$ by statement ($a$). This property implies that
$\omega=g(x)
dx+x\tilde{f}{\left(x^2\right)}ydx = d\left(\int
g(x)dx\right) +
\sum_{r=0}^{\left[\frac{m-2}{2}\right]}a_{2r+2}x^{2r+1}y dx$  by
\eqref{eq7}. From
Lemma~\ref{lem1}($b$) we obtain $x^{2r+1}y dx=d\left(y
\overline{Q}_{r}\right)+\left(y\overline{q}_{r}\right)
dH$,  thus
$$
\omega=d\left(\int g(x)dx +y\left(
\sum_{r=0}^{\left[\frac{m-2}{2}\right]}a_{2r+2 }
\overline{Q}_{r}\right)\right)+\left(y\left(\sum_{r=0}^{\left[\frac{m-2}{2
} \right]}a_{2r+2}\overline{q}_{r}\right)\right)
dH=dQ+\left(y\overline{q}\right)
dH,
$$
where
$\overline{Q}_{r},\overline{q}_{r}, \overline{q}
\in\R{\left[x^2,H\right]}$ are
homogeneous and
$
\deg_2{\overline{q}}
=\left[\frac{m-2}{2}\right]$. Moreover, a simple computation shows that
\begin{equation}
 \label{eq5}
\tag{$5$}
\overline{q}_{r}=2\sum_{i=0}^{r} {r+1 \choose
i}\left(\frac{r+1-i}{2i
+1}\right)\left(2H\right)^{r-i}\left(x^2-2H\right)^i.
\end{equation}

As
$
\left(y\overline{q}\right)\omega=\overline{q}\hat{g}{
\left(x^2\right)} ydx
+\overline{q}\tilde{g}{\left(x^2\right)}xy dx+
\overline{q}\tilde{f}{\left(x^2\right)}xy^2 dx$
and  $\overline{q}\tilde{f}{\left(x^2\right)}xy^2dx
=\overline{q}\tilde{f}{\left(x^2\right)}
x\left(2H-x^2\right)dx$, it follows that
$\left(y\overline{q}\right)\omega=\overline{q}\hat{g}{
\left(x^2\right)} ydx+dQ_2+q_2dH$ because of
statements $(a)$ and $(b)$ of Lemma \ref{lem1}. Hence we obtain
$$
\int_{\gamma_c} \left(y\bar{q}\right)\omega=
\int_{\gamma_c}\overline{q}
\hat{g}{\left(x^2\right)}ydx=\int_{\gamma_c}
\left(\sum_{r=0}^{\left[(m-2)/2
 \right]}a_{2r+2}\overline{q}_{r}\right)\left(
\sum_{s=0}^{\left[n/2\right]}b_{2s}x^{2s}\right)y dx.
$$
By using  expression \eqref{eq5} of $\overline{q}_{r}$, a straightforward
computation,
and Lemma \ref{lem1}($c$) we obtain the formula given in the statement. Finally,
statement ($c$) follows from the formula given in  statement ($b$).
\end{proof}

\begin{remark}\rm System \eqref{eq1}
with $\mu=\nu=1$, $F_1(x)=-x^2$, and $g_1(x)=1-x^2$
does not satisfy the hypothesis
in definition of
$\tilde{\Hh}_{\nu}^{\mu}(m,n)$ because $g_1(x)$ is not an odd function.
Here $m=n=2$ and from
Theorem~\ref{mth1}($a$) it follows that $\tilde{\Hh}_{1}^{1}(2,2)=0$; however, for
$\ep$ small enough, this system has one
medium amplitude limit cycle. Indeed, we need only to prove that the first non-vanishing coefficient of the displacement function \eqref{df}, associated to the system, has a simple positive zero. The system can be written in the form \eqref{Ll} as $dH-\ep\omega=0$ with $\omega=(1-x^2-2xy)dx$. By Lemma~\ref{lem4}$(a)$, $L_1(c)\equiv0$, and by Theorem~\ref{PPMF} and Lemma~\ref{lem4}$(b)$, $L_2(c)=-\pi c(4-2c)$. Now, system \eqref{eq1}
with $\mu=\nu=2$,
$F_1(x)=-3x^2$, $F_2(x)=-2x^3$, $g_1(x)=x^2+x^3$, and $g_2(x)=\left(-5+25x^2\right)/6$ does not
satisfy the hypothesis in definition of
$\bar{\Hh}_{\nu}^{\mu}(m,n)$ because $F_2(x)$ is not an even function. In this case $m=n=3$ and by
Theorem~\ref{mth1}($b$),
$\bar{\Hh}_{2}^{2}(3,3)=1$; however, for $\ep$ small enough, the resulting system has two
medium amplitude limit cycles. Indeed, following previous ideas, and using Theorem~\ref{PPMF} and Lemma~\ref{lem4} it is easy to see that $L_1(c)\equiv0$, $L_2(c)\equiv0$, and $L_3(c)=-\pi c (c-1)(c-2)$.
\end{remark}

\section{Proof of the main results}
\label{sec4}
We can assume, after a linear change of variables if necessary, that
$F_i(0)=0$ for all $1\leq i \leq \mu$. Suppose that
$F_i(x)=\sum_{r=1}^{m}(a_{i(r-1)}/r)x^r$ and
$g_i(x)=\sum_{s=0}^{n}b_{is}
x^s$. Thus,
$f_i(x)=F_i^{\prime}{(x)}=\sum_{r=0}^{m-1}a_{ir}x^r$ and $g_i(x)$ can be written as
$f_i(x)=\hat{f}_{i}{\left(x^2\right)} +
x \tilde{f}_{i}{\left(x^2\right)}$ and
$g_i(x)=\hat{g}_{i}{\left(x^2\right)} + x
\tilde{g}_{i}{\left(x^2\right)}$, respectively, according to
\eqref{eq7}.

\begin{proof}[Proof of  Theorem \ref{mth1}]
($a$). By hypothesis, $g_i(x)$ is odd for $1\leq i\leq \nu$, which means
that $g_i(x)=x\tilde{g}_{i}{\left(x^2\right)}$  for $1\leq i\leq \nu$.
Let $L_{k}(c)$ be the first
non-vanishing  Poincar\'e--Pontryagin--Melnikov function in \eqref{df}.
If
$k=1$, then the theorem is true. Indeed, we have $
L_1(c)=\int_{\gamma_c} \omega_1=\int_{\gamma_c}
x\tilde{g}_{i}{\left(x^2\right)}  dx+\int_{\gamma_c}
\hat{f}_{1}{\left(x^2\right)} y dx+\int_{\gamma_c}
\tilde{f}_{1}{\left(x^2\right)} xy dx$, and as $\int_{\gamma_c}
x\tilde{g}_{i}{\left(x^2\right)}  dx\equiv 0$, and $\int_{\gamma_c}
\tilde{f}_{1}{\left(x^2\right)} xy dx\equiv 0$ by Corollary \ref{cor1}, we
obtain $L_1(c)=\int_{\gamma_c} \hat{f}_{1}{\left(x^2\right)} y dx$. From
\eqref{eq7} we have
$\deg_2 \hat{f}_{1}{\left(x^2\right)}=\left[(m-1)/2\right]$; hence
$L_1(c)$ has at most $\left[(m-1)/2\right]$ positive zeros  because of Corollary
\ref{cor2}. Moreover,  we can
choose suitable coefficients  of $F_1(x)$ in such a way that $L_1(c)$ has
exactly
$\left[(m-1)/2\right]$ simple positive zeros. Therefore,
by applying  the
Poincar\'e--Pontryagin--Andronov criterion it follows that
$\tilde{\Hh}_{\nu}^{\mu}(m,n)=\left[(m-1)/2\right]$.

Suppose then that $k\geq 2$ and we are therefore in the hypothesis of
Theorem~\ref{th3}.
If $\Omega_l \in \mathcal{A}$ for $1\leq l \leq
k-1$,
then $L_{k}(c)=\int_{\gamma_c}
\omega_{k}$ by Lemma
\ref{lem3}, and
by applying the same idea as in previous paragraph, we obtain
$\tilde{\Hh}_{\nu}^{\mu}(m,n)=\left[(m-1)/2\right]$. Accordingly, it remains to prove
that
$\Omega_l \in \mathcal{A}$ for $1\leq l \leq k-1$.

We proceed by induction on $k$. If $k=2$, then $L_1(c)\equiv 0$, which
implies that
$\Omega_1=\left( x\tilde{g}_{1}{\left(x^2\right)}+xy
\tilde{f}_{1}{\left(x^2\right)} \right) dx \in \mathcal{A}$. We now assume
that the
assertion is true for  $k-2$, and we will prove it for $k-1$. By induction
hypothesis, $\Omega_i \in \mathcal{A}$ for $1 \leq i \leq k-2$, which implies
that $\Omega_i =dQ_i+q_idH$ with $q_i \in \mathcal{S}$  for $1\leq i\leq k-2$ by
Corollary~\ref{cor1}. Furthermore, by
Lemma~\ref{lem3}, $\omega_j \in \mathcal{A}$ for $1\leq j\leq k-2$. Hence
$\overline{\Omega}_{k-1}:=\sum_{i+j=k-1}q_i \omega_j$
with $1 \leq i,\, j\leq k-2$ is an element of $\mathcal{A}$ because of Lemma
\ref{lem2}. Since
$\Omega_{k-1}=\omega_{k-1}+\overline{\Omega}_{k-1}$,
$L_{k-1}(c)=\int_{\gamma_c} \Omega_{k-1} =\int_{\gamma_c}
\omega_{k-1}=\int_{\gamma_c}
\hat{f}_{k-1}{\left(x^2\right)} ydx\equiv 0$, whence $\omega_{k-1}=\left(
x\tilde{g}_{k-1}{\left(x^2\right)}+xy
\tilde{f}_{k-1}{\left(x^2\right)} \right) dx \in \mathcal{A}$. Therefore
$\Omega_{k-1} \in \mathcal{A}$, which completes the proof of statement ($a$).

($b$). First, we will note two properties
concerning $\omega_i$ and $\int_{\gamma_c}\omega_i$ which we will use along
the
proof. Second, we will split the proof into two cases: $m$ odd and $m$ even.

For $1\leq i < \mu_0$ the 1-form $\omega_i=g_i(x)dx$ is exact, that is,
$\omega_i=dQ_i+q_idH$ with $q_i \equiv 0$. Hence, by
Theorem~\ref{th3},  $\Omega_i =\omega_i$ and
$L_i(c)=\int_{\gamma_c}\Omega_i \equiv 0$  for $1\leq i < \mu_0$, and
$L_{\mu_0}(c)=\int_{\gamma_c}\Omega_{\mu_0}=
\int_{\gamma_c}\omega_{\mu_0}$.
On the other hand, since $F_i(x)$ is even for $\mu_0  < i\leq \mu$,
$f_i(x)=x\tilde{f}_{i}{\left(x^2\right)}$  for $\mu_0 < i\leq \mu$. Thus,
$\omega_i=d\left(\int
g_i(x)dx\right)+x\tilde{f}_{i}{\left(x^2\right)}ydx$ for
$ i> \mu_0 $, and as $x^{2r+1}y
dx=d\left(y
\overline{Q}_{0r}\right)+\left(y\overline{q}_{0r}\right)
dH$ because of Lemma~\ref{lem1}($b$), we conclude that
$\omega_i=d\left(\bar{Q}_i\right)+\left(y\bar{q}_i\right)dH$; of course
$\bar{q}_i\equiv 0$ for $i>\mu$. Therefore $\int_{\gamma_c}\omega_i \equiv 0$
for all $i > \mu_0 $.

{\it Case $m$  odd.} If $m$ is odd, then $\deg F_{\mu_0}(x)=m$ because $F_i(x)$
is an even polynomial
for
$\mu_0 < i \leq \mu$. Since
$F_{\mu_0}^{\prime}(x)=f_{\mu_0}(x)=\hat{f}_{\mu_0}{\left(x^2\right)} +
x \tilde{f}_{\mu_0}{\left(x^2\right)}$ has an even degree,
$\hat{f}_{\mu_0}{\left(x^2\right)} \not \equiv 0$. Hence, from Lemma
\ref{lem4}($a$)
it follows that $L_{\mu_0}(c)=\int_{\gamma_c}\omega_{\mu_0}
=\int_{\gamma_c} \hat{f}_{\mu_0}{\left(x^2\right)} ydx \not \equiv 0$, and
it has at most $\left[(m-1)/2\right]$  positive zeros, counting
multiplicities; moreover,  we can choose
suitable coefficients of $F_{\mu_0}(x)$ in such a way that $L_{\mu_0}(c)$ has
exactly
$\left[(m-1)/2\right]$ simple positive zeros. Therefore by the
Poincar\'e--Pontryagin--Andronov criterion,
$\bar{\Hh}_{\nu}^{\mu}(m,n)=\left[(m-1)/2\right]$.

{\it Case $m$  even.} Let $L_{k}(c)$ be the first
non-vanishing  Poincar\'e--Pontryagin--Melnikov function of \eqref{df}.
If $k=\mu_0$, then
$L_{\mu_0}(c)$  has at most $\left[(m-1)/2\right]$ positive
zeros,
counting
multiplicities, because of Lemma~\ref{lem4}($a$). Since  $m$ is even,
$\left[(m-1)/2\right]\leq
\left[m/2\right]+\left[n/2\right]-1$. Hence $L_{\mu_0}(c)$  has at most
$\left[m/2\right]+\left[n/2\right]-1$ positive
zeros, counting multiplicities.

We claim that if $k\geq
\mu_0+1$, then  $\omega_{1},\ldots,\omega_{k-1-\mu_0} \in
\mathcal{A}$, $\Omega_i=dQ_i+q_idH$ with $q_i \in \mathcal{S}$
for ${\mu_0}\leq i \leq k-1$,
and $L_{k}(c)=\int_{\gamma_c} \left(y\bar{q}_{\mu_0}\right)\omega_{k-\mu_0}=
\int_{\gamma_c}\bar{q}_{\mu_0}\hat{g}_{k-\mu_0}{ \left(x^2\right)} y dx$. By
assuming that
this assertion is true and by applying  Lemma \ref{lem4}($b$) we conclude that
$L_{k}(c)$ has at most $\left[m/2\right]+\left[n/2\right]-1$ positive zeros,
counting
multiplicities; moreover,  we can choose
suitable coefficients of $\bar{q}_{\mu_0}$ and $\hat{g}_{k-\mu_0}{
\left(x^2\right)}$ in such a way that $L_{k}(c)$ has exactly
$\left[m/2\right]+\left[n/2\right]-1$ simple positive zeros. Thus,  by
the Poincar\'e--Pontryagin--Andronov criterion,
$\bar{\Hh}_{\nu}^{\mu}(m,n)=\left[m/2\right]+\left[n/2\right]-1$.   Therefore,
to finish the proof of statement ($b$) we need only to confirm the
assertion, which we prove next by proceeding by induction on $k$.

If $k=\mu_0+1$, then we will prove that $\Omega_{\mu_0}=dQ_{\mu_0}
+q_{\mu_0} dH$ with $q_{\mu_0}\in \mathcal{S}$, and that
$L_{\mu_0+1}(c)=\int_{\gamma_c} \bar{q}_{\mu_0}\hat{g}_1{
\left(x^2\right)} ydx$. We know that $\Omega_{\mu_0}=\omega_{\mu_0}$, and from
Lemma
\ref{lem4}($b$) it follows that $\Omega_{\mu_0}=\omega_{\mu_0}=dQ_{\mu_0}
+q_{\mu_0} dH$, where $q_{\mu_0}=y\bar{q}_{\mu_0} \not
\equiv 0\in \mathcal{S}$. On the other hand, by
Theorem~\ref{th3},
$L_{\mu_0+1}(c)=\int_{\gamma_c}\Omega_{\mu_0+1}$, where
$\Omega_{\mu_0+1}=\omega_{\mu_0+1}+q_1\omega_{\mu_0}+\cdots
+q_{\mu_0-1}\omega_{2}
+q_{\mu_0}\omega_{1}$. Since $q_i \equiv 0$ for $1\leq i <
\mu_0$, $\Omega_{\mu_0+1}=\omega_{\mu_0+1}+q_{\mu_0}\omega_{1}$. Moreover,
since
$\int_{\gamma_c}\omega_{\mu_0+1}\equiv 0$,
$L_{\mu_0+1}(c)=\int_{\gamma_c}
\left(y\bar{q}_{\mu_0}\right)\omega_1=\int_{\gamma_c} \bar{q}_{\mu_0}\hat{g}_1{
\left(x^2\right)} ydx$.

If $k=\mu_0+2$, then $
L_{\mu_0+1}(c)=\int_{\gamma_c}
\left(y\bar{q}_{\mu_0}\right)\omega_1=\int_{\gamma_c}
\bar{q}_{\mu_0}\hat{g}_1{ \left(x^2\right)} y dx\equiv 0$. Since
$\bar{q}_{\mu_0}\not \equiv 0$, $\hat{g}_1{ \left(x^2\right)}\equiv 0$ by
Lemma~\ref{lem4}($c$). This implies that
$\Omega_1=\omega_1\in \mathcal{A}$, and by Corollary \ref{cor1},
$\Omega_1=dQ_1+q_1dH$ with $q_1 \in \mathcal{S}$.
Moreover, we know that  $\omega_{\mu_0}=dQ_{\mu_0}
+q_{\mu_0} dH$ with $q_{\mu_0}=y\bar{q}_{\mu_0}\in \mathcal{S}$, and
$\omega_{\mu_0+1}=d\left(\bar{Q}_{\mu_0+1}\right)+\left(y\bar{q}_{\mu_0+1}
\right)dH$. Thus,
$\Omega_{\mu_0+1}=\omega_{\mu_0+1}+q_{\mu_0}
\omega_1=dQ_{\mu_0+1}+q_{\mu_0+1}dH$ with $q_{\mu_0+1} \in \mathcal{S}$
because
of Corollary \ref{cor1}.  On the other hand, from
Theorem \ref{th3}
we have
$$
L_{\mu_0+2}(c)=\int_{\gamma_c}\omega_{\mu_0+2}+
\int_{\gamma_c}q_1\omega_{\mu_0+1}+\int_{\gamma_c}q_2\omega_{\mu_0} +\cdots+
\int_{\gamma_c}q_{\mu_0}\omega_{2}+\int_{\gamma_c}q_{\mu_0+1}\omega_{1}.
$$
As $\omega_1 \in
\mathcal{A}$ and $q_{\mu_0+1} \in \mathcal{S}$, then we have
 $q_{\mu_0+1}\omega_{1} \in \mathcal{A}$ following Lemma
\ref{lem2} and $\int_{\gamma_c}q_{\mu_0+1}\omega_{1}\equiv 0$ by Corollary
\ref{cor1}. In addition, we know that $q_i \equiv 0$ for $1\leq i <
\mu_0$ and $\int_{\gamma_c}\omega_{\mu_0+2}\equiv 0$.  Hence $L_{\mu_0+2}(c)=
\int_{\gamma_c}\left(y\bar{q}_{\mu_0}\right)\omega_{2}=\int_{\gamma_c}\bar{q}_{
\mu_0}\hat{g}_{2}{ \left(x^2\right)} y dx$.

We now assume that the assertion holds for  $k-1$, and we will prove it for $k$.
By Theorem \ref{th3}, $
L_{k}(c)=\int_{\gamma_c}\Omega_{k}$, where
$$
\Omega_{k}=\omega_{k}+q_1\omega_{k-1}+\cdots
+q_{\mu_0-1}\omega_{k+1-\mu_0}
+q_{\mu_0}\omega_{k-\mu_0}
+q_{\mu_0+1}\omega_{k-1-\mu_0}
+\cdots+
q_{k-2}\omega_{2}+
q_{k-1}\omega_{1}.
$$
Since $q_i \equiv 0$ for $1\leq i <
\mu_0$, $\Omega_{k}=
\omega_{k}+q_{\mu_0}\omega_{k-\mu_0}+q_{\mu_0+1}\omega_{k-1-\mu_0} +\cdots+
q_{k-2}\omega_{2}+q_{k-1}\omega_{1}$.

On the other hand, from the induction hypothesis it follows that
$\omega_1,\ldots,\omega_{k-2-\mu_0} \in
\mathcal{A}$, $\Omega_i=dQ_i+q_idH$ with $q_i \in \mathcal{S}$
for $\mu_0\leq i \leq k-2$, and
$L_{k-1}(c)=\int_{\gamma_c}\bar{q}_{\mu_0}\hat{g}_{k-1-\mu_0}{\left(x^2\right)}
y
dx$. Since $L_{k-1}(c) \equiv 0$,
$\hat{g}_{k-1-\mu_0}{\left(x^2\right)}  \equiv 0$ because of
Lemma~\ref{lem4}($c$),
which implies that $\omega_{k-1-\mu_0}
\in \mathcal{A}$. Therefore, $q_{\mu_0}\omega_{k-1-\mu_0} +\cdots+
q_{k-3}\omega_{2}+q_{k-2}\omega_{1} \in \mathcal{A}$ by Lemma~\ref{lem2}. Moreover,
we have $\omega_{k-1}=d\left(\bar{Q}_{k-1}\right)+\left(y\bar{q}_{k-1}
\right)dH$, and by applying Corollary \ref{cor1} we obtain
$$
\Omega_{k-1}=
\omega_{k-1}+q_{\mu_0}\omega_{k-1-\mu_0} +\cdots+
q_{k-3}\omega_{2}+q_{k-2}\omega_{1}=dQ_{k-1}+q_{k-1}dH,\quad
\mbox{with $q_{k-1} \in \mathcal{S}$.}
$$
Hence $q_{\mu_0+1}\omega_{k-1-\mu_0} +\cdots+
q_{k-2}\omega_{2}+q_{k-1}\omega_{1} \in \mathcal{A}$ by Lemma~\ref{lem2}. In
addition, $\omega_{k}=d\left(\bar{Q}_{k}\right)+\left(y\bar{q}_{k}
\right)dH$. Thus, we obtain $
L_{k}(c)=\int_{\gamma_c}q_{\mu_0}\omega_{k-\mu_0}=\int_{\gamma_c}
\left(y\bar{q}_{\mu_0}\right)\omega_{k-\mu_0}=
\int_{\gamma_c}\bar{q}_{\mu_0}\hat{g}_{k-\mu_0}{ \left(x^2\right)} y dx.$
\end{proof}

\subsection*{Acknowledgments}Part of the results of this work come from the
author's postdoctoral stay at  the Departament de Matem\`atiques of the
Universitat Aut\`onoma de
Barcelona.
The author would like to thank the Centre de Recerca Matem\`atica for their
support and hospitality during the period in which this paper was written.


\begin{thebibliography}{00}

\bibitem{Ga} L. Gavrilov, {\it Petrov modules and zeros of Abelian
integrals}. Bull. Sci. Math. {\bf 122} (1998), no. 8, 571--584.

\bibitem{Fr} J. P. Fran\c{c}oise, {\it Successive derivatives of a first
return map, application to the study of quadratic vector fields}.
Ergodic Theory Dynam. Systems {\bf 16} (1996), no. 1, 87--96.

\bibitem{CL} C. Christopher and C.  Li, {\it Limit Cycles of Differential
Equations.} Advanced Courses in Mathematics. CRM Barcelona.
Birkh\"{a}user Verlag, Basel, 2007.



\bibitem{GT2} A. Gasull and J. Torregrosa, {\it A relation between small
amplitude and big limit cycles.} Rocky Mountain J. Math. {\bf 31} (2001),
no. 4,
1277--1303.


\bibitem{GFD} N. Glade, L. Forest, and J. Demongeot, {\it  Li\'enard
systems and potential-Hamiltonian decomposition. III. Applications.} C. R. Math.
Acad. Sci. Paris {\bf 344}  (2007), no. 4, 253--258.

\bibitem{Ili}  I. D. Iliev, {\it The number of limit cycles due to
polynomial perturbations of the harmonic oscillator.} Math. Proc.
Cambridge Phil. Soc. {\bf 127} (1999), 317--322.


\bibitem{Ily} Y. Ilyashenko, {\it The origin of limit cycles
under perturbations of the equations
$\frac{dw}{dz}=-\frac{R_z}{R_w}$ where $R(z, w)$ is a polynomial.}
USSR Math. Sbornik {\bf 78 (120)} (1969), no. 3, 353--364.

\bibitem{IY} Y. Ilyashenko and S. Yakovenko, {\it Lectures on Analytic
Differential Equations.} Graduate Studies in Mathematics, {\bf 86}.
American
Mathematical Society, Providence, RI, 2008.

\bibitem{LMP} A. Lins, W. de Melo and C. C. Pugh,
{\it On Li\'enard's equation.} Lecture Notes in
Math., Vol. {\bf 597}, Springer, Berlin, (1977)
335--357.

\bibitem{LMT} J. Llibre, A. C. Mereu and  M. A. Teixeira, {\it Limit cycles
of
the
generalized polynomial Li\'enard differential equations.} Math. Proc.
Cambridge
Philos. Soc. {\bf 148} (2010), no. 2, 363--383.

\bibitem{L} J. Llibre, {\it A survey on the limit cycles of the generalization
polynomial Li\'enard differential equations.} Mathematical models in
engineering, biology and medicine, 224--233. AIP Conf. Proc., 1124, Amer. Inst.
Phys., Melville, NY, 2009.



\bibitem{M} H. N. Moreira, {\it Li\'enard-type equations and the epidemiology of
malaria.} Ecological modelling  {\bf 60} (1992), no. 2, 139--150.

\bibitem{RP} S. Rebollo-Perdomo,
{\it The infinitesimal Hilbert's 16th problem in the real and complex
planes.}
Qual. Theory Dyn. Syst., {\bf 7} (2009), 467--500.

\bibitem{Ya} S. Yakovenko, {\it A geometric proof of the Bautin
theorem.} Concerning the Hilbert 16th problem.  Amer. Math. Soc. Transl. Ser. 2, {\bf 165}. Amer.
Math. Soc., Providence, RI, (1995) 203--219.

\end{thebibliography}
\end{document}